\newtheorem{thm}{Theorem}[section]
\newtheorem{cor}[thm]{Corollary}
\newtheorem{lemma}[thm]{Lemma}
\newtheorem{prop}[thm]{Proposition}
\theoremstyle{definition}
\theoremstyle{remark}
\newtheorem{ex}[thm]{Example}
\newtheorem{question}[thm]{Question}
\DeclareMathOperator{\TR}{tr}
\DeclareMathOperator{\SPAN}{span}
\newcommand\UNFs[1]{\Omega_{N,M} {(\mathbb {#1})} } 
\newcommand\Gnk[1]{\mu_{N,M} (\mathbb #1)} 
\newcommand\textem[1]{{\em #1}}
\begin{document}

\title{On the rigidity of geometric and spectral properties of Grassmannian frames}


\author{Peter G. Casazza}
\email{Casazzap@missouri.edu }
\author{John I. Haas}
\email{haasji@missouri.edu}

\address{Department of Mathematics, University
of Missouri, Columbia, MO 65211-4100}

\begin{abstract}
We study the rigidity properties of Grassmannian frames: basis-like sets of unit vectors that correspond to optimal Grassmannian line packings.  It is known that Grassmannian frames characterized by the Welch bound must satisfy the restrictive geometric and spectral conditions of being both equiangular and tight; however, less is known about the neseccary properties of other types of Grassmannian frames.
We examine explicit low-dimensional examples of orthoplectic Grassmannian frames and conclude that, in general, the necessary conditions for the existence of Grassmannian frames can be much less restrictive.  In particular, we exhibit a pair of $5$-element Grassmannian frames in $\mathbb C^2$ manifesting with differently sized angle sets and different reconstructive properties (ie, only one of them is a tight frame).  This illustrates the complexity of the line packing problem, as there are cases where a solution may coexist with another solution of a different geometric and spectral character.  Nevertheless, we find that these "twin" instances still respect a certain rigidity, as there is a necessary trade-off between their tightness properties and the cardinalities of their angle sets.  The proof of this depends  on the observation that the traceless embedding of Conway, Hardin and Sloane sends the vectors of a unit-norm, tight frame to a zero-summing set on a higher dimensional sphere.  In addition, we review some of the known bounds for characterizing optimal line packings in $\mathbb C^2$ and discuss several examples of Grassmannian frames achieving them.
\end{abstract}


\maketitle

\linenumbers

\section{Introduction}

Due to their usefulness in numerous areas of science~\cite{MR2662471,MR2059685, MR2778089}, engineering~\cite{5946867,MR2028016,MR2921716, MR2021601} and mathematics ~\cite{MR2149656, fickus2016tremain, MR1984549}, \textem{equiangular tight frames (ETFs)} are a class of frames that have received much attention in recent years ~\cite{MR3150919, MR2277977, MR2015832, MR2736147, MR2350682, MR3005267, Tremain_08, MR2235693, Zau, Fickus:2015aa};  however, ETFs rarely exist~\cite{szollHosi2014all}.  Because a compactness argument shows that Grassmannian frames always exist ~\cite{MR1984549},  they  often serve as ideal generalizations for ETFs in applications where low coherence is desired ~\cite{bgb15, MR1984549, bodmann2015achieving, MR2021601, MR3429342}.   

A complex \textem{Grassmannian frame} is a set of $N$ unit vectors that spans $\mathbb C^M$ with the property that the maximal element of its \textem{angle set}, or set of  pairwise absolute inner products, is minimal. A Grassmannian frame is \textem{$K$-angular} if the cardinality of its angle set is $K$  and it is \textem{tight} if it satisfies a scaled version of Parseval's identity.

Equiangular tight frames are $1$-angular Grassmannian frames characterized by achievement of the optimal lower bound of Welch~\cite{MR2059685, MR2711357}, but this is only possible if $N \leq M^2$ ~\cite{MR1984549, MR2021601}.  When $N>M^2$, the orthoplex bound provides an alternative means for characterizing known examples of Grassmannian frames ~\cite{MR1418961, MR0074013, bodmann2015achieving}, but this bound can only be achieved if $N \leq 2(M^2-1)$ ~\cite{MR1418961, MR0074013, bodmann2015achieving}.  

For the special case of unit-norm frames in $\mathbb C^2$, the isometric spherical embedding technique of Conway, Hardin, and Sloane~\cite{MR1418961} sends frame vectors to the unit sphere in $\mathbb R^3$.  As has been previously shown in \cite{MR2882247}, leveraging T\'{o}th's spherical cap packing bound for the unit sphere in $\mathbb R^3$ ~\cite{fejes1943covering} yields a lower bound for optimal coherence that is stronger than the orthoplex bound whenever $N>6$ (Theorem~\ref{th_toth}), and this saturates for the case $N=12$, as exemplified by a tight Grassmannian frame consisting of $12$ vectors in $\mathbb C^2$ (Example~\ref{ex_12_2}).  As with the cases of Grassmannian frames consisting of $N=4$ and $6$ vectors for $\mathbb C^2$, it is striking that this example also embeds perfectly into the vertices of a Platonic solid, an icosahedron in this case.  Furthermore, we observe that this example generates a \textem{complex projective 5-design} and is thus relevant to combinatoral and quantum information literature.

Because Grassmannian frames that achieve the Welch bound are tight and have angle sets of minimal cardinality, it is natural to ask the following questions:
\begin{question}\label{q_gr_tight}
Is every Grassmannian frame tight?
\end{question}
\begin{question}\label{q_gr_angles}
If $\Phi$ is a Grassmannian frame not characterized by the Welch bound, can we infer anything about the cardinality of its angle set or its spectral properties?
\end{question}

In ~\cite{BK06}, the authors answered Question~\ref{q_gr_tight}  in the negative for the real case by showing that Grassmannian frames consisting of $5$ vectors in $\mathbb R^3$ are  always equiangular  but never tight; furthermore, their result suggests that a plausible answer to Question~\ref{q_gr_angles} is that the cardinality of the angle set of a real Grassmannian frame should satisfy a minimality condition.

By considering two distinct examples of Grassmannian frames consisting of $5$ vectors in $\mathbb C^2$, we find the answers to these questions for the complex case to be more complicated than we anticipated. Strictly speaking, the answer to Question~\ref{q_gr_tight} for the complex case is also in the negative, because there exists a non-tight, $2$-angular Grassmannian frame with $5$ vectors for $\mathbb C^2$ (Example~\ref{ex_Gr_52_bi}) ; however, this question may be the wrong one to ask, because a tight, $3$-angular Grassmannian frame consisting of $5$ vectors over $\mathbb C^2$ also exists (Example~\ref{ex_Gr_52_tri}).  

Nevertheless, we find that one still encounters a certain amount of rigidity if one stipulates tightness in the frame's design; in particular, we prove that every tight Grassmannian frame consisting of $5$ vectors in $\mathbb C^2$ must have an angle set with cardinality greater than $2$ (Theorem~\ref{th_5_2}).
The proof of this depends on basic properties about $2$-angular, tight frames (Proposition~\ref{prop_equi} and Theorem~\ref{th_btfs_odd}) and the observation that whenever the spherical embedding technique of ~\cite{MR1418961}  is applied to a tight, unit-norm frame, then the embedded vectors in the higher-dimensional Euclidean sphere must sum to zero (Theorem~\ref{th_sph_emb_zero}).

In light of the coexistence of tight, $3$-angular and non-tight, $2$-angular Grassmannian frames in this scenario, we arrive at the following partial answer to the questions above.  There exist cases where, for a given fixed number of vectors and fixed dimension, a Grassmannian frame may be constructed in multiple ways, where there is some trade-off between its spectral properties (tightness) and geometric properties (angle set).

The remainder of this paper is outlined as follows. In Section~\ref{sec:prelim}, we establish notation and terminology and collect a few basics facts about frame theory.  In Section~\ref{sec:emb}, we recall the spherical embedding technique of ~\cite{MR1418961}  and show that it embeds tight frames into zero-summing vectors. In Section~\ref{sec:bounds}, we recall the Welch bound and orthoplex bound, and we use the spherical embedding technique along with a result of T\'{o}th  to improve upon these bounds for the case of $N>6$ vectors in $\mathbb C^2$.  In this section, we also discuss several examples of Grassmannian frames that achieve these bounds.  Finally, in Section~\ref{sec:gr_5_2}, we prove a few basic facts about $2$-angular tight frames and use these facts to prove that a tight Grassmannian frame consisting of $5$ vectors in $\mathbb C^2$ can never be $2$-angular.

\section{Preliminaries}\label{sec:prelim}
Let $\{e_j\}_{j=1}^M$ denote the canonical orthonormal basis for $\mathbb F^M$, where $\mathbb F = \mathbb R$ or $\mathbb C$ and let $I_M$ denote the $M \times M$ identity matrix. A set of vectors $\Phi = \{\phi_j\}_{j=1}^N \subset \mathbb F^M$ is a \textem{(finite) frame} if $\SPAN \{\phi_j\}_{j=1}^N = \mathbb F^M.$  
It is often convenient to identify a frame  $\Phi = \{\phi_j\}_{j=1}^N$ in terms of its \textem{synthesis matrix}
$$\Phi = \left[ \phi_1 \, \, \phi_2 \, \, ... \,\, \phi_N \right],$$
the $M \times N$ matrix with columns given by the \textem{frame vectors}.  Just as we have written
$\Phi = \{\phi_j\}_{j=1}^N $
and 
$\Phi = \left[ \phi_1 \, \, \phi_2 \, \, ... \,\, \phi_N \right]$ in the last sentence, we  proceed with the tacit understanding that $\Phi$ is both a matrix and a set of vectors.  Furthermore, we reserve the symbols $M$ and $N$ to refer to the dimension of the span of a frame and the cardinality of a frame, respectively.

A frame $\Phi = \{\phi_j\}_{j=1}^N$  is \textem{$A$-tight} if $\Phi \Phi^* = \sum_{j=1}^N \phi_j  \phi_j^*= A I_M$ for some $A>0$ and it is \textem{unit-norm} if each frame vector has norm $\|\phi_j\|=1$.  If $\Phi$ is unit-norm and $A$-tight, then $A=\frac{N}{M}$ because
$$N= \sum_{l=1}^M\sum_{j=1}^N |\langle e_l, \phi_j \rangle|^2 = \sum_{l=1}^M\sum_{j=1}^N \TR(\phi_j \phi_j^* e_l e_l^*) = A \sum_{l=1}^M \| e_l \|^2 = AM,$$
in which case also we have the identity
\begin{equation}\label{eq_tight_un}
\sum\limits_{l=1}^N | \langle \phi_j, \phi_l \rangle |^2  = \frac{N}{M},
\end{equation}
for every $j \in \{1,2,...,N \}$.

Given a unit-norm frame $\Phi = \{ \phi_j \}_{j=1}^N$, its \textem{frame angles} are the elements of the set 
$$
A_\Phi : =\{ |\langle \phi_j, \phi_l \rangle | : j \neq l \},
$$ i.e. \textem{the angle set of $\Phi$}, and we say that $\Phi$ is \textem{$K$-angular} if $|A_\Phi| =K$ for some $K \in \mathbb N$.   In the special cases that $K=1,2$ or $3$, we say that $\Phi$ is \textem{equiangular}, \textem{biangular} or \textem{triangular}, respectively.  

If $\Phi = \{ \phi_j \}_{j=1}^N$ is $K$-angular with frame angles $c_1, c_2,...,c_K$, then we say that $\Phi$ is \textem{equidistributed} if there exist $m_1, m_2, ..., m_K \in \mathbb N$ such that 
$$
\bigg| \Big\{ j' \in \{1,...,N\} : j' \neq j, |\langle \phi_j, \phi_{j'} \rangle | = c_k \Big\} \bigg| = m_k 
$$
for every $j \in \{1,2,...,N\}$ and every $k \in \{1,2,...,K\}$.  In this case, we call the positive integers $m_1,m_2,...,m_K$  the \textem{multiplicities} of $\Phi$ and remark that $\sum_{j=1}^K m_j= N-1$.

We let $\UNFs{F}$ denote the space of unit norm frames consisting of $N$ vectors in $\mathbb F^M$.
Given $\Phi = \{ \phi_j \}_{j=1}^N \in \UNFs{F}$,  its \textem{coherence} is defined by
$$\mu(\Phi) = \max\limits_{j \neq l} |\langle \phi_j, \phi_l \rangle |$$
and we define the \textem{Grassmanian constant for the pair $(N,M)$} by
$$\Gnk{F} = \inf\limits_{{\Phi} \in \UNFs{F}} \max\limits_{j \neq l} |\langle \phi_j, \phi_l \rangle | .$$ 
We say that $\Phi \in \UNFs{F}$ is a \textem{Grassmannian frame} if
$$
\mu(\Phi) = \Gnk{F}. 
$$

\section{Spherical Embedding}\label{sec:emb}

In ~\cite{MR1418961}, the authors observed that a unit-norm frame can be isometrically embedded into a sphere in some high dimensional real Hilbert space.  

\begin{thm}\label{th_sph_emb}[Conway et al.,~\cite{MR1418961}]
Let $D=M^2-1$ if $\mathbb F = \mathbb C$ or let $D=\frac{(M+2)(M-1)}{2}$ if $\mathbb F = \mathbb R$. If $\Phi = \{\phi_j\}_{j=1}^N \in \UNFs{F}$, then the frame vectors can be isometrically embedded into the unit sphere in $\mathbb R^D$ via
$$\phi_j \mapsto y_j \in \mathbb R^{D}$$ such that, for all $j, l \in \{1,...,N\}$, we have
\begin{equation*}
| \langle \phi_j, \phi_l \rangle |^2 = \frac{1}{M} + \frac{M-1}{M} \langle y_j, y_l \rangle.
\end{equation*}
\end{thm}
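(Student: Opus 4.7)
The plan is to use the outer-product embedding $\phi \mapsto \phi\phi^*$, which sends unit vectors in $\mathbb F^M$ to rank-$1$ orthogonal projections. The natural target is the real inner product space $V$ of self-adjoint $M \times M$ matrices (Hermitian if $\mathbb F = \mathbb C$, real symmetric if $\mathbb F = \mathbb R$) equipped with the Hilbert--Schmidt inner product $\langle A, B \rangle = \TR(AB)$. A direct parameter count gives $\dim_{\mathbb R} V = M^2$ in the complex case and $\dim_{\mathbb R} V = \tfrac{M(M+1)}{2}$ in the real case, so in either case the traceless subspace $V_0 \subset V$ has real dimension equal to the value of $D$ in the statement, and can therefore be identified isometrically with $\mathbb R^D$ by choosing any real orthonormal basis.

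The driving identity is
$$\TR(\phi_j \phi_j^* \, \phi_l \phi_l^*) = |\langle \phi_j, \phi_l \rangle|^2,$$
which follows immediately from the cyclic property of the trace. Writing $P_j = \phi_j \phi_j^*$ and translating into $V_0$ by $Y_j = P_j - \tfrac{1}{M} I$, the relations $\TR(P_j) = 1$ and $\TR(I) = M$ yield
$$\TR(Y_j Y_l) = |\langle \phi_j, \phi_l \rangle|^2 - \tfrac{1}{M}.$$
In particular, each $Y_j$ has Hilbert--Schmidt norm $\sqrt{(M-1)/M}$, so the rescaled vectors $y_j = \sqrt{M/(M-1)}\, Y_j$ lie on the unit sphere of $V_0$, and substituting this scaling into the previous display produces precisely
$$|\langle \phi_j, \phi_l \rangle|^2 = \tfrac{1}{M} + \tfrac{M-1}{M} \langle y_j, y_l \rangle.$$

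The content of the argument is therefore a single linear-algebraic observation, namely that the squared absolute inner product $|\langle \phi_j,\phi_l\rangle|^2$ coincides with an honest real inner product of rank-$1$ projections, together with the correct choice of origin (the maximally mixed state $\tfrac{1}{M} I$) and rescaling factor. The main thing to verify carefully is the real-dimension count of the self-adjoint matrix space in each case, and correspondingly that the Hilbert--Schmidt pairing really does restrict to a symmetric positive-definite \emph{real} bilinear form on the Hermitian subspace, so that transporting everything through an orthonormal basis of $V_0$ gives a legitimate isometry into $\mathbb R^D$.
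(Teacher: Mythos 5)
Your proposal is correct and follows essentially the same route as the paper: the embedding $\phi_j \mapsto \phi_j\phi_j^* - \tfrac{1}{M}I_M$ into the traceless self-adjoint matrices with the Hilbert--Schmidt inner product, the dimension count giving $D$, and the rescaling by $\sqrt{M/(M-1)}$ onto the unit sphere. Your direct computation of $\TR(Y_jY_l)$ is in fact a slightly cleaner bookkeeping than the paper's detour through squared Hilbert--Schmidt distances, but it is the same argument.
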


\begin{proof}
Let $\Phi = \{\phi_j\}_{j=1}^N \in \UNFs{F}$.  The frame vectors of $\Phi$ can be  embedded into the "traceless" subspace of the $M \times M$ self-adjoint (or symmetric) matrices via the mapping
$$ \phi_j \mapsto \phi_j   \phi_j^* - \frac{1}{M }I_M,$$
which is isomorphic to $\mathbb R^{D}$ by a dimension counting argument.  In particular, these embedded vectors all lie on a sphere of radius $\sqrt{\frac{M-1}{M}}$, because the Hilbert Schmidt norm gives $\| \phi_j   \phi_j^* - \frac{1}{M }I_M\|_{H.S.}^2 = \frac{M-1}{M}$ for every $j \in \{1,...,N\}$, and this embdedding is distance preserving because for $j \neq l$
\begin{align*}
\| \phi_j   \phi_j^* - \phi_l   \phi_l^*\|_{H.S.}^2 
&=  2\left(1- tr( \phi_j   \phi_j^* \phi_l   \phi_l^* )\right) \\
&= 2\left(1- |\langle \phi_j, \phi_l \rangle|^2 )\right). \\
\end{align*}
By identifying $\phi_j   \phi_j^* - \frac{1}{M }I_M$ and $\phi_j   \phi_j^* - \frac{1}{M }I_M$ with vectors $\tilde y_j, \tilde y_l \in \mathbb R^{D}$ on a sphere of radius $\sqrt{\frac{M-1}{M}}$
and using that
$$
\|y_j - y_l\|^2 = 2\frac{M-1}{M} (1 - \langle \tilde y_j , \tilde y_l \rangle)
,
$$
we can rewrite this equation as
$$
 |\langle \phi_ j, \phi_l \rangle|^2 = \frac{1}{M} + \frac{M-1}{M}  \langle y_j , y_l \rangle,
$$
where $y_j$ and $y_l$ are the unit vectors in the direction of $\tilde y_j$ and $\tilde y_l$, respectively.
\end{proof}

We observe that whenever a unit-norm, tight frame is embedded into a higher dimensional sphere as above, then the embedded vectors are also zero-summing.

\begin{cor}\label{th_sph_emb_zero}
Let $D=M^2-1$ if $\mathbb F = \mathbb C$ or let $D=\frac{(M+2)(M-1)}{2}$ if $\mathbb F = \mathbb R$. If $\Phi = \{\phi_j\}_{j=1}^N \in \UNFs{F}$ is a tight frame, then the frame vectors can be isometrically embedded into the unit sphere in $\mathbb R^D$ via
$$\phi_j \mapsto y_j \in \mathbb R^{D}$$ such that, for all $j, l \in \{1,...,N\}$, we have
\begin{equation}\label{eq_sph_emb_angles}
| \langle \phi_j, \phi_l \rangle |^2 = \frac{1}{M} + \frac{M-1}{M} \langle y_j y_l \rangle
\end{equation}
and
\begin{equation}\label{eq_sph_emb_zero}
\sum\limits_{j=1}^N y_j = 0.
\end{equation}
\end{cor}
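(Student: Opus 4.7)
The plan is to simply re-run the construction from the proof of Theorem~\ref{th_sph_emb} and observe that the tight frame hypothesis forces the pre-normalized matrix-valued embedded vectors to sum to zero; then normalization, which amounts to multiplication by a single positive scalar that is the same for every index, transports this identity to the unit sphere.

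More concretely, I would first recall from the proof of Theorem~\ref{th_sph_emb} that each frame vector is sent to
$$\tilde y_j := \phi_j \phi_j^* - \tfrac{1}{M} I_M,$$
viewed as an element of the $D$-dimensional real Hilbert space of traceless self-adjoint $M \times M$ matrices, and that $\|\tilde y_j\|_{H.S.} = \sqrt{(M-1)/M}$ for each $j$; the unit-norm vector $y_j$ on the sphere in $\mathbb R^D$ is then $y_j = \sqrt{M/(M-1)}\, \tilde y_j$. Equation~\eqref{eq_sph_emb_angles} is inherited verbatim from Theorem~\ref{th_sph_emb}, so the only new content is the zero-sum identity~\eqref{eq_sph_emb_zero}.

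Next I would exploit tightness. As noted in Section~\ref{sec:prelim}, a unit-norm $A$-tight frame necessarily satisfies $A = N/M$, so
$$\sum_{j=1}^N \phi_j \phi_j^* = \tfrac{N}{M} I_M,$$
and therefore
$$\sum_{j=1}^N \tilde y_j \;=\; \sum_{j=1}^N \phi_j \phi_j^* \;-\; \tfrac{N}{M} I_M \;=\; 0$$
inside the traceless subspace. Multiplying through by the common positive scalar $\sqrt{M/(M-1)}$ yields $\sum_j y_j = 0$, which is precisely~\eqref{eq_sph_emb_zero}.

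Since the heavy lifting has already been done in Theorem~\ref{th_sph_emb}, there is no real obstacle here; the only thing worth being careful about is that the scaling factor passing from $\tilde y_j$ to $y_j$ is the same constant for every $j$, which is exactly why tightness (a condition on the sum of the rank-one matrices $\phi_j\phi_j^*$) translates cleanly into a zero-sum condition on the normalized embedded vectors.
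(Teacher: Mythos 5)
Your proposal is correct and follows essentially the same route as the paper's own proof: embed via $\phi_j \mapsto \phi_j\phi_j^* - \tfrac{1}{M}I_M$, use tightness to see that these traceless matrices sum to zero, and then pass to the unit sphere. Your explicit remark that the normalization factor $\sqrt{M/(M-1)}$ is the same for every $j$ is exactly the detail the paper leaves implicit when it says the claim follows by ``mimicking'' the proof of Theorem~\ref{th_sph_emb}.
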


\begin{proof}
Let $\Phi = \{\phi_j\}_{j=1}^N \in \UNFs{F}$.  As in the proof of Theorem~\ref{th_sph_emb}, we  embed the frame vectors via
$$ \phi_j \mapsto \phi_j   \phi_j^* - \frac{1}{M }I_M.$$
Summing over these matrices and using the tightness property, we have
$$
\sum\limits_{j=1}^N \left( \phi_j   \phi_j^* - \frac{1}{M }I_M  \right) = \frac{N}{M} I_M - \frac{N}{M} I_M= 0_{M},
$$
where $0_{M}$ denotes the $M \times M$ zero matrix.  The claim then follows by mimicing the proof of  Theorem~\ref{th_sph_emb}.
\end{proof}

\section{Lower Bounds for the Grassmannian constant}\label{sec:bounds}

The optimal lower bound for the Grassmannian constant is the Welch bound ~\cite{welch:bound},
$$
  \Gnk{F} \ge \sqrt{\frac{N-M}{M(N-1)}}.
$$
A Grassmannian frame  achieves this lower bound if and only if it is an equiangular, tight frame ~\cite{MR1984549, MR2711357}, but it is well-known that this cannot occur when $N>M^2$ if $\mathbb F =  \mathbb C$ or $N>\frac{M(M+1)}{2}$ if $\mathbb F =  \mathbb R$ ~\cite{MR1418961, MR2021601}.

 By applying Theorem~\ref{th_sph_emb} to a sphere-packing result of Rankin ~\cite{MR0074013}, the authors of ~\cite{MR1984549} (see also ~\cite{henkel05, bodmann2015achieving})  extrapolated a lower bound for $\Gnk{F}$ that is stronger than the Welch bound whenever $N>M^2$ or $N> \frac{M(M+1)}{2}$ for $\mathbb F = \mathbb C$ or $\mathbb R$, respectively.

\begin{thm}\label{th_ortho} [Orthoplex bound, ~\cite{MR0074013,MR1984549,henkel05, bodmann2015achieving}]
Let $D=M^2-1$ if $\mathbb F = \mathbb C$ or let $D=\frac{(M+2)(M-1)}{2}$ if $\mathbb F = \mathbb R$. 
If $N>D+1$, then
$$
\Gnk{F} \geq \frac{1}{\sqrt{M}} \, .
$$
\end{thm}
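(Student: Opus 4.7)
The plan is to combine the spherical embedding of Theorem~\ref{th_sph_emb} with a classical sphere-packing theorem of Rankin~\cite{MR0074013}. Rankin's theorem (the ``simplex bound'') states that if $\{y_j\}_{j=1}^N$ is a collection of unit vectors in $\mathbb{R}^D$ with $\langle y_j, y_l \rangle < 0$ for all $j \neq l$, then $N \leq D+1$, with equality realized by the vertices of a regular simplex. Contrapositively, if $N > D+1$, then at least one pair must satisfy $\langle y_j, y_l \rangle \geq 0$. This is precisely the ingredient that produces the constant $\frac{1}{\sqrt{M}}$ after passing through the embedding.

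First I would fix an arbitrary $\Phi = \{\phi_j\}_{j=1}^N \in \UNFs{F}$ with $N > D+1$, where $D$ is as in Theorem~\ref{th_sph_emb}. Applying that theorem produces unit vectors $y_1, \dots, y_N \in \mathbb{R}^D$ satisfying
$$|\langle \phi_j, \phi_l \rangle|^2 = \frac{1}{M} + \frac{M-1}{M}\langle y_j, y_l \rangle$$
for all $j,l$. Next, since $N > D+1$, Rankin's simplex bound forces the existence of indices $j \neq l$ with $\langle y_j, y_l \rangle \geq 0$. Substituting into the displayed identity yields
$$\mu(\Phi)^2 \geq |\langle \phi_j, \phi_l \rangle|^2 \geq \frac{1}{M},$$
so $\mu(\Phi) \geq \frac{1}{\sqrt{M}}$. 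Taking the infimum over all $\Phi \in \UNFs{F}$ delivers $\Gnk{F} \geq \frac{1}{\sqrt{M}}$, as desired.

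The argument is essentially a one-line reduction once one recognizes the right form of Rankin's result, so there is no serious obstacle; the only subtlety worth flagging is that Rankin's theorem must be invoked in its strict-inequality (simplex) form rather than its orthoplex form, since the conclusion we need is about $\langle y_j, y_l \rangle \geq 0$ for some pair, and this is exactly what the bound $N > D+1$ delivers. The tightness property of $\Phi$ is not required here, which is why Theorem~\ref{th_sph_emb} rather than Corollary~\ref{th_sph_emb_zero} suffices.
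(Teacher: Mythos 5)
Your proof is correct and follows exactly the route the paper indicates: the paper states this theorem without an explicit proof but attributes it to applying the spherical embedding of Theorem~\ref{th_sph_emb} to Rankin's sphere-packing result, which is precisely your argument (including the correct choice of the simplex form of Rankin's bound, whose contrapositive gives a pair with $\langle y_j, y_l\rangle \geq 0$ once $N > D+1$). No gaps.
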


The following example comes from ~\cite{bodmann2015achieving}, where the authors constructed infinite families of tight, complex Grassmannian frames with coherence equal to the orthoplex bound, which they termed \textem{orthoplectic Grassmannian frames}.  Of particular interest to us, it is a tight, triangular Grassmannian frame in $\Omega_{5,2} ( \mathbb C)$.

\begin{ex}\label{ex_Gr_52_tri}
Let $\omega = e^{2 \pi i /3}$ and let 
$$
\Phi = \frac{1}{\sqrt{2}} 
\left[   
\begin{array}{ccccc}
\sqrt{2} & 0 & 1 & 1 & 1\\
0 & \sqrt{2} & 1 & \omega & \omega^2
\end{array}
\right].
$$
It is straightforward to check that $\Phi$ is tight and has frame angles $c_1 = \frac{1}{\sqrt{2}}, c_2 = \frac{1}{2}$ and $c_3 =0$, so it is triangular and it is a Grassmannian frame by Theorem~\ref{th_ortho}.
\end{ex}

The next example also achieves the orthoplex bound, so it is also a Grassmannian frame in 
$\Omega_{5,2} ( \mathbb C)$; however, unlike the preceding example, this one is biangular but not tight.
\begin{ex}\label{ex_Gr_52_bi}
Let 
$$
\Phi = \frac{1}{\sqrt{2}} 
\left[   
\begin{array}{ccccc}
\sqrt{2} & 1 & 1 & 1 & 1\\
0 & -1 & 1 & i & -i
\end{array}
\right].
$$
It is straightforward to check that $\Phi$ has frame angles $c_1 = \frac{1}{\sqrt{2}}$ and $c_2 = 0$, so it is biangular and it is a Grassmannian frame by Theorem~\ref{th_ortho}.  However, $\Phi \Phi^* \neq \frac{5}{2} I_5$, so it is not tight.
\end{ex}

Just as the Welch bound cannot saturate when a frame's cardinality is too large, it also known that a Grassmannian frame's coherence must be greater than
 the orthoplex bound when $N>2(M^2-1)$ in the complex case or $N>(M+2)(M-1)$ in the real case~\cite{MR0074013,MR1984549,henkel05, bodmann2015achieving}.  Thus, the orthoplex bound can also be improved when there are too many vectors. 
 In the special case that $\mathbb F = \mathbb C$ and $M=2$, the embedding from Theorem~\ref{th_sph_emb} sends points from the sphere in $\mathbb C^2$ to points on the sphere in $\mathbb R^3$, so the classical spherical cap packing result of T\'{o}th ~\cite{fejes1943covering} leads to an improved lower bound for $\mu_{N,2} (\mathbb{C})$ when $N\geq 7$, as has been previously noted in \cite{MR2882247}.

\begin{thm}\label{th_toth}[T\'{o}th's Bound, ~\cite{fejes1943covering}]
Let $N\geq 3$.  If $\{x_j\}_{j=1}^N$ is a set of unit vectors in $\mathbb R^3$, then
$$
\max\limits_{j \neq l} \langle x_j, x_l \rangle \geq \frac{1}{2} \csc^2\left(\frac{N \pi}{6(N-2)} \right) - 1
.
$$
\end{thm}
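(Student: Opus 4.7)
The plan is to follow L.~Fejes T\'{o}th's classical Voronoi-cell argument, which reduces a packing bound on the sphere to a spherical isoperimetric comparison. I would begin by reformulating the claim: let $\theta$ denote the minimum angular separation among the $x_j$, so that $\max_{j\neq l}\langle x_j, x_l\rangle = \cos\theta$, and the desired inequality becomes the upper bound $\theta \leq \theta_N$ where $\cos\theta_N = \tfrac{1}{2}\csc^2\!\bigl(\tfrac{N\pi}{6(N-2)}\bigr)-1$.

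First I would consider the Voronoi tessellation $\{V_j\}_{j=1}^N$ of $S^2$ induced by $\{x_j\}_{j=1}^N$. Each $V_j$ is a convex spherical polygon that contains the spherical cap of angular radius $\theta/2$ centered at $x_j$, since by the triangle inequality for spherical distance no other $x_l$ can be closer than $x_j$ to a point within this cap. Writing $n_j$ for the number of sides of $V_j$, Euler's formula $V-E+F=2$ on $S^2$ with $F=N$, combined with the facts that each Voronoi vertex is incident to at least three edges and each edge to exactly two cells, yields the averaging estimate $\frac{1}{N}\sum_{j=1}^N n_j \leq \frac{6(N-2)}{N}$.

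The geometric heart of the argument is a spherical isoperimetric inequality for polygons: among all spherical $n$-gons containing an inscribed cap of angular radius $\rho$, the minimum area $A(n,\rho)$ is attained by the regular circumscribed $n$-gon with inradius exactly $\rho$. A direct computation from the right spherical triangle formed by the polygon's center, one of its vertices, and the midpoint of an adjacent side yields an explicit trigonometric expression for $A(n,\rho)$ involving $\cos\rho$ and $\tan(\pi/n)$. Since $|V_j|\geq A(n_j,\theta/2)$, summing over $j$ gives
\[
4\pi \;=\; \sum_{j=1}^N |V_j| \;\geq\; \sum_{j=1}^N A(n_j,\theta/2) \;\geq\; N \cdot A\!\left(\tfrac{6(N-2)}{N},\,\tfrac{\theta}{2}\right),
\]
where the last step follows from the concavity of the continuous extension $n\mapsto A(n,\rho)$ together with the Euler estimate on $\bar n$. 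Inverting the explicit formula at $n=6(N-2)/N$ and applying standard half-angle identities then reduces the inequality $4\pi\geq N\cdot A(\bar n,\theta/2)$ to precisely the stated bound on $\cos\theta$.

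The principal obstacle is the isoperimetric lemma together with the accompanying concavity verification. The inequality admits a proof by a Steiner-type symmetrization that replaces each cell by its regular counterpart of the same inradius without increasing area, and the concavity in the interpolated variable $n$ is established by a derivative calculation on the explicit formula for $A(n,\rho)$. These two ingredients are the only steps with significant computational content; everything else is either combinatorial (Euler's formula) or routine trigonometric rearrangement.
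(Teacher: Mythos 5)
The paper does not actually prove this statement: it is quoted from Fejes T\'{o}th's 1943 paper with a citation only, so there is no internal proof to compare against. Your sketch reconstructs the standard Fejes T\'{o}th argument --- the Voronoi tessellation, the cap of angular radius $\theta/2$ inside each cell, the Euler-formula estimate $\bar n \le 6(N-2)/N$ on the average number of sides, the circumscribed-polygon isoperimetric lemma, and the final trigonometric inversion --- and that architecture is the correct one. The closing computation also checks out: with $A(n,\rho)=2n\arccos\bigl(\cos\rho\,\sin(\pi/n)\bigr)-(n-2)\pi$, setting $A(n,\rho)=4\pi/N$ at $n=6(N-2)/N$ forces $\arccos\bigl(\cos\rho\sin(\pi/n)\bigr)=\pi/3$, hence $\cos(\theta/2)\ge\tfrac12\csc\bigl(\tfrac{N\pi}{6(N-2)}\bigr)$, and the stated bound follows from $\cos\theta=2\cos^{2}(\theta/2)-1$.

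Two caveats. First, the convexity direction in your Jensen step is backwards: to deduce $\sum_j A(n_j,\rho)\ge N\,A(\bar n,\rho)$ and then substitute $\bar n\le 6(N-2)/N$, you need $n\mapsto A(n,\rho)$ to be \emph{convex and decreasing}; concavity, as written, gives the reverse inequality and the argument fails at exactly that point. Fortunately the explicit formula above is in fact convex and decreasing in $n$, so this is a repairable sign error rather than a fatal one, but it must be corrected and the convexity actually verified. Second, essentially all of the depth of the theorem sits in the isoperimetric lemma (that the minimal-area spherical $n$-gon containing a given cap is the regular circumscribed one) together with the degenerate configurations a complete proof must handle (coincident or antipodal points, two-sided cells when $N=3$, cells not contained in a hemisphere). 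You correctly identify the lemma as the principal obstacle but do not carry it out, so what you have is an accurate roadmap to the classical proof rather than a self-contained argument.
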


\begin{thm}\label{cor_toth}[\cite{MR2882247}]
If $N \geq 3$, then 
$$
\mu_{N,2} (\mathbb C) \geq  \frac{1}{2} \csc \left(\frac{N \pi}{6(N-2)} \right)  .
$$
\end{thm}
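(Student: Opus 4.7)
The plan is to derive the bound as a direct corollary of the spherical embedding from Theorem~\ref{th_sph_emb} combined with T\'oth's spherical cap packing bound (Theorem~\ref{th_toth}). In the special setting $\mathbb F=\mathbb C$ and $M=2$, the embedding dimension is $D=M^2-1=3$, so the machinery of Theorem~\ref{th_sph_emb} maps any unit-norm frame in $\mathbb C^2$ to a set of $N$ unit vectors on $S^2\subset\mathbb R^3$, which is precisely the setting to which T\'oth's bound applies.

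Fix an arbitrary $\Phi=\{\phi_j\}_{j=1}^N\in\Omega_{N,2}(\mathbb C)$. First I would invoke Theorem~\ref{th_sph_emb} with $M=2$ to produce unit vectors $y_1,\ldots,y_N\in\mathbb R^3$ satisfying
$$
|\langle \phi_j,\phi_l\rangle|^2=\tfrac{1}{2}+\tfrac{1}{2}\langle y_j,y_l\rangle
$$
for all $j,l$. Taking the maximum over $j\neq l$ on both sides and then applying Theorem~\ref{th_toth} to the set $\{y_j\}_{j=1}^N\subset S^2$ gives
$$
\max_{j\neq l}|\langle \phi_j,\phi_l\rangle|^2\ \geq\ \tfrac{1}{2}+\tfrac{1}{2}\left(\tfrac{1}{2}\csc^2\!\left(\tfrac{N\pi}{6(N-2)}\right)-1\right)=\tfrac{1}{4}\csc^2\!\left(\tfrac{N\pi}{6(N-2)}\right).
$$
Next I would take square roots; this is legitimate because, for $N\geq 3$, the angle $\tfrac{N\pi}{6(N-2)}$ lies in the interval $(\pi/6,\pi/2]$, on which the cosecant is strictly positive, so extracting the positive root yields
$$
\mu(\Phi)=\max_{j\neq l}|\langle \phi_j,\phi_l\rangle|\ \geq\ \tfrac{1}{2}\csc\!\left(\tfrac{N\pi}{6(N-2)}\right).
$$
Since $\Phi\in\Omega_{N,2}(\mathbb C)$ was arbitrary, taking the infimum of the left-hand side over $\Omega_{N,2}(\mathbb C)$ gives the desired estimate on $\mu_{N,2}(\mathbb C)$.

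There is essentially no obstacle here: all of the heavy lifting has already been carried out in Theorems~\ref{th_sph_emb} and~\ref{th_toth}. The only small bookkeeping item is confirming that the cosecant's argument stays in a range where the positive square root is the correct choice, and verifying that no information is lost when passing the maximum through the affine relation between $|\langle\phi_j,\phi_l\rangle|^2$ and $\langle y_j,y_l\rangle$ (which is immediate because the relation is monotone increasing in $\langle y_j,y_l\rangle$).
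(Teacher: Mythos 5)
Your proposal is correct and follows essentially the same route as the paper: apply the $M=2$ spherical embedding of Theorem~\ref{th_sph_emb} to land on $S^2\subset\mathbb R^3$, push the maximum through the monotone affine relation, invoke T\'oth's bound (Theorem~\ref{th_toth}), and simplify to $\frac{1}{4}\csc^2\bigl(\frac{N\pi}{6(N-2)}\bigr)$ before taking square roots. The only difference is that you spell out the positivity check on the cosecant and the final infimum over $\Omega_{N,2}(\mathbb C)$, which the paper leaves implicit.
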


\begin{proof}
Let $\Phi = \{ \phi_j \}_{j=1}^N \in \Omega_{N,2} (\mathbb C).$  By Theorem~\ref{th_sph_emb}, there exist points $\{y_j\}_{j=1}^N$ on the unit sphere in $\mathbb R^3$ such that 
$$
| \langle \phi_j, \phi_l \rangle |^2 = \frac{1}{2} + \frac{1}{2} \langle y_j, y_l\rangle.
$$
The claim then follows from Theorem~\ref{th_toth} because
\begin{align*}
\max\limits_{j \neq l} | \langle \phi_j, \phi_l \rangle |^2 
&= \frac{1}{2} + \frac{1}{2} \max\limits_{j \neq l} \langle y_j, y_l \rangle \\
&\geq \frac{1}{2} + \frac{1}{2} \left[  \frac{1}{2} \csc^2\left(\frac{N \pi}{6(N-2)} \right) - 1  \right]. \\
\end{align*}
\end{proof}

T\'{o}th's bound from Theorem~\ref{th_toth} saturates when $N=3,4,6,$ and $12$ ~\cite{fejes1943covering}.  When $N=3$, the bound is obtained by the three vertices of an equilateral triangle centered at the origin in $\mathbb R^3$.  For $N=4$, the bound is obtained by the vertices of a regular $3$-simplex (i.e. a tetrahedron) centered at the origin.  For $N=6$, the bound is obtained by the vertices of an orthoplex (i.e. an $\ell^1$-ball or octahedron) centered at the origin and the the case $N=12$ corresponds to the twelve vertices of an icosahedron centered at the origin.  Furthermore, for the cases $N=3,4,$ and $6$, it is known that there exist tight Grassmannian frames in $\Omega_{N,2} (\mathbb C)$ that not only achieve the lower bound in Theorem~\ref{cor_toth} but  embed perfectly into the vertices of an equilateral triangle~\cite{MR1418961}, regular tetrahedron~\cite{MR2059685}, and regular octahedron~\cite{MR2337670}, respectively.  Next, we exhibit an example of a tight Grassmannian frame in $\Omega_{12,2} (\mathbb C)$ that embeds perfectly into the vertices of a regular icosahedron.

\begin{ex}\label{ex_12_2} [Icosaplectic Grassmannian Frame]
Let $a=\sqrt{\frac{5+\sqrt{5} }{10}}$ and $b=\sqrt{1 -a^2}$ and let $\eta = e^{2 \pi i /5}$ and  $\omega = e^{2 \pi i/10}$ be the primitive $5$th and $10$th roots of unity, respectively.
A straightforward computation shows that
$$
\Phi = 
\left[
\begin{array}{cccccccccccc}
\scriptscriptstyle 1 & \scriptscriptstyle 0 & \scriptscriptstyle -b & \scriptscriptstyle -b & \scriptscriptstyle -b & \scriptscriptstyle -b  & \scriptscriptstyle -b& \scriptscriptstyle a \scriptscriptstyle \omega  & \scriptscriptstyle a \scriptscriptstyle \omega \scriptscriptstyle \eta & \scriptscriptstyle a \scriptscriptstyle \omega \scriptscriptstyle \eta^2 & \scriptscriptstyle a \scriptscriptstyle \omega \scriptscriptstyle \eta^3  &\scriptscriptstyle a \scriptscriptstyle \omega \scriptscriptstyle \eta^4  \\
\scriptscriptstyle 0 &\scriptscriptstyle 1 & 
 \scriptscriptstyle -a \scriptscriptstyle \omega  & \scriptscriptstyle -a \scriptscriptstyle \omega \scriptscriptstyle \eta & \scriptscriptstyle -a \scriptscriptstyle \omega \scriptscriptstyle \eta^2 & \scriptscriptstyle -a \scriptscriptstyle\omega \scriptscriptstyle\eta^3  &\scriptscriptstyle -a \scriptscriptstyle\omega \scriptscriptstyle\eta^4 &
\scriptscriptstyle b &\scriptscriptstyle  b &\scriptscriptstyle b &\scriptscriptstyle b &\scriptscriptstyle b
\end{array}
\right]
$$
is an equidistributed, triangular, tight frame in  $\Omega_{12,2} (\mathbb C)$ with frame angles 
$
c_1 = a > c_2 =b > c_3 =0 $
and corresponding multiplicities $m_1 =5 , m_2 =5 $ and $m_3 =1$.  It follows from elementary trigonometry that the lower bound in Theorem~\ref{cor_toth} equals $c_1$, showing that $\Phi$ is a Grassmannian frame in $\Omega_{12,2} ( \mathbb C)$. If $Y = \{y_j\}_{j=1}^{12}$ denotes the unit vectors in $\mathbb R^3$ obtained via the embedding from Theorem~\ref{th_sph_emb}, then another  computation using the identity
$$
| \langle \phi_j, \phi_l \rangle |^2 = \frac{1}{2} + \frac{1}{2} \langle y_j, y_l\rangle
$$
shows that the vectors of $Y$ must correspond to the vertices of a regular icosahedron.
\end{ex}

Finally, we remark that the frame from Example~\ref{ex_12_2} has some relevance for the combinatorial and quantum information literature.  It is simple to check that
$$
\frac{1}{(12)^2}\sum\limits_{j,l=1}^N |\langle \phi_j, \phi_l \rangle |^{10} = \frac{1}{6},
$$
so $\Phi$ generates an (equally) \textem{weighted complex projective $5$-design} by Theorem~{2.3} of ~\cite{MR2337670}.  Such objects are known to be optimal for linear quantum state determination with respect to a fixed number of measurements ~\cite{MR2269701, MR2337670}.  

\section{Grassmannian Frames for $\mathbb C^2$ Consisting of $5$ vectors }\label{sec:gr_5_2}

In this section, we show that although $\mu_{5,2}(\mathbb C)$ can be achieved by both biangular and triangular frames, as in Examples~\ref{ex_Gr_52_bi} and \ref{ex_Gr_52_tri}, there is a necessaray trade-off between the cardinality of the angle set and tightness.  In order to do this, we collect a few basic facts about \textem{biangular, tight frames (BTFs)}.

First, we show that every BTF is equidistributed.  A specialized version of this result was shown for the case of {\it $2$-distant tight frames} in \cite{MR3325226}.

\begin{prop}\label{prop_equi}
If $\Phi=\{\phi_j \}_{j =1}^N$ is a biangular, tight frame for $\mathbb F^M$, then $\Phi$ is equidistributed.
\end{prop}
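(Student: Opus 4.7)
The plan is to reduce the claim to a $2\times 2$ linear system whose coefficients do not depend on the index $j$, so that its unique solution forces the two counts $m_1$ and $m_2$ to be constant in $j$.

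Write $c_1, c_2$ for the two distinct frame angles of $\Phi$, and for each fixed $j \in \{1,2,\ldots,N\}$ set
\[
m_k(j) \;:=\; \Bigl|\bigl\{\, j' \in \{1,\ldots,N\} : j' \neq j,\ |\langle \phi_j, \phi_{j'}\rangle| = c_k \,\bigr\}\Bigr|, \qquad k=1,2.
\]
Since $\Phi$ is biangular, every inner product $|\langle \phi_j, \phi_{j'}\rangle|$ with $j' \neq j$ equals either $c_1$ or $c_2$, so immediately
\[
m_1(j) + m_2(j) \;=\; N-1.
\]
This is the first linear equation; it is evidently independent of $j$.

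For the second equation I would invoke the tightness identity \eqref{eq_tight_un}. Splitting off the $l = j$ summand (which contributes $|\langle\phi_j,\phi_j\rangle|^2 = 1$) gives
\[
\sum_{l \neq j} |\langle \phi_j, \phi_l\rangle|^2 \;=\; \frac{N}{M} - 1 \;=\; \frac{N-M}{M}.
\]
By biangularity, each term on the left contributes either $c_1^2$ or $c_2^2$, hence
\[
c_1^2\, m_1(j) + c_2^2\, m_2(j) \;=\; \frac{N-M}{M}.
\]
Again, the right-hand side does not depend on $j$.

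The $2\times 2$ linear system
\[
\begin{pmatrix} 1 & 1 \\ c_1^2 & c_2^2 \end{pmatrix}\begin{pmatrix} m_1(j) \\ m_2(j) \end{pmatrix} \;=\; \begin{pmatrix} N-1 \\ \frac{N-M}{M} \end{pmatrix}
\]
has determinant $c_2^2 - c_1^2 \neq 0$ because $c_1$ and $c_2$ are distinct nonnegative frame angles, so the system admits a unique solution. Since neither the coefficient matrix nor the right-hand side depends on $j$, the values $m_1 := m_1(j)$ and $m_2 := m_2(j)$ are independent of $j$, which is precisely what it means for $\Phi$ to be equidistributed. There is no serious obstacle here; the only subtlety to be careful about is handling the $l=j$ term separately when applying \eqref{eq_tight_un}, and noting that the definition of equidistribution allows $m_k$ to be any nonnegative integer (so one need not worry about the edge case where one of the angles happens not to appear at a particular vertex $j$, since the linear system handles that case uniformly).
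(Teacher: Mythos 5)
Your proof is correct and is essentially the same as the paper's: both derive the two $j$-independent linear constraints $m_1(j)+m_2(j)=N-1$ and $c_1^2 m_1(j)+c_2^2 m_2(j)=\frac{N-M}{M}$ from biangularity and the tightness identity, and conclude from $c_1\neq c_2$ that the counts are uniquely determined. The paper carries out the elimination by hand rather than citing invertibility of the $2\times 2$ system, but that is only a cosmetic difference.
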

\begin{proof}
If $c_1, c_2$ are the frame angles of $\Phi$, then Equation~\ref{eq_tight_un} implies that for each $j \in \{1,2,...,N\}$, there exists a pair of positive integers $m_{1, j}$ and $m_{2,j}$ such that $m_{1, j} + m_{2,j} =N-1$ and
$$
m_{1,j} c_1^2 + m_{2,j} c_2^2
=
\sum\limits_{l =1, l \neq j}^N | \langle \phi_j, \phi_l \rangle |^2 
=
\| \phi_j \|^2 \left( \frac{N}{M}- 1 \right)
=\frac{N-M}{M},
$$
where the last equality follows from the unit-norm property.  Next, for $j,l \in \{1,2,...,N\}$, we compute:
\begin{align*}
(N-1)c_1^2+m_{2,j}(c_2^2-c_1^1)&=(m_{1,j}+m_{2,j})c_1^2
+ m_{2,j}(c_2^2-c_1^2)\\
&= m_{1,j}c_1^2+m_{2,j}c_2^2\\
&= m_{1,l}c_1^2+m_{2,l}c_2^2\\
&=(m_{1,l}+m_{2,l})c_1^2+m_{2,l}(c_2^2-c_2^1)\\
&= (N-1)c_1^2+m_{2,l}(c_2^2-c_1^2).
\end{align*}
Since $c_1\not= c_2$, it follows that $m_{2,j}=m_{2,l}$, which implies that
$m_{1,j}=m_{1,l}$.
\end{proof}

It is worth noting that, in general, K-angular tight frames are not equidistributed, so BTFs and ETFs are quite special in this regard.  For instance, the frame from Example~\ref{ex_Gr_52_tri} is tight and $3$-angular, but it is not equidistributed.    The second observation we need about BTFs concerns the multiplicities of their frame angles when $N$ is odd.

\begin{lemma}\label{lem_even}
Let $N \in \mathbb N$ be odd.  If $\Phi=\{\phi_j \}_{j =1}^N$ is a biangular, equidistributed frame for $\mathbb F^M$ with frame angles $c_1, c_2$ and corresponding multiplicities $m_1, m_2$, then $m_1$ and $m_2$ are both even.
\end{lemma}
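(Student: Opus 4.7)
The plan is to use a simple handshake-lemma argument on the "angle graph" associated to each frame angle. Specifically, for $k \in \{1,2\}$, define a graph $G_k$ on the vertex set $\{1,2,\dots,N\}$ by declaring $\{j,l\}$ to be an edge precisely when $|\langle \phi_j,\phi_l \rangle| = c_k$. Because $\Phi$ is biangular with frame angles $c_1 \neq c_2$, every unordered pair $\{j,l\}$ with $j \neq l$ is an edge of exactly one of $G_1, G_2$, so this is well-defined.

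The equidistribution hypothesis says exactly that $G_k$ is $m_k$-regular: every vertex $j$ has $m_k$ neighbors in $G_k$. Hence the sum of degrees in $G_k$ equals $Nm_k$, and by the handshake lemma this must equal twice the number of edges of $G_k$, which is an even integer. Therefore $Nm_k$ is even for $k=1,2$. Since $N$ is odd by hypothesis, it follows that $m_1$ and $m_2$ are both even.

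I do not anticipate any real obstacle; the only thing worth verifying carefully is that equidistribution as stated in the preliminary section really does give a regular graph (it does — the defining condition says the neighborhood count is the same constant $m_k$ at every vertex, independent of $j$). The parity constraint $m_1 + m_2 = N - 1$ (even, since $N$ is odd) is consistent with both multiplicities being even, but by itself does not force it; the handshake lemma is what rules out the both-odd case.
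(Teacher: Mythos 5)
Your proof is correct and is essentially the same argument as the paper's: the paper counts occurrences of $c_1$ among the off-diagonal entries of the symmetric matrix $\left( |\langle \phi_l, \phi_j \rangle| \right)_{j,l}$ and notes that symmetry forces $Nm_1$ to be even, which is exactly your handshake-lemma count on the $m_k$-regular angle graph. The only cosmetic difference is that you apply the parity argument to both $m_1$ and $m_2$ directly, while the paper does it for $m_1$ and then deduces $m_2$ even from $m_1+m_2=N-1$.
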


\begin{proof}

 Let $M = \left( |\langle \phi_l, \phi_j \rangle | \right)_{j,l=1}^N$, the matrix obtained by taking the absolute values of the entries of the Gram matrix of $\Phi$.
If $m_1$ is odd, then $N m_1$ is odd, so $c_1$ occurs an odd number of times among the off-diagonal entries of $M$.   However, $M$ is symmetric,  so the number of occurrences of $c_1$  above the diagonal of $M$ equals the number of occurrences of $c_1$ below the diagonal,which implies that $N m_1$ is even, a contradiction.  Therefore, $m_1$ is even, so the fact that $N-1 = m_1 + m_2$ is even implies that $m_2$ is also even.
\end{proof}

\begin{thm}\label{th_btfs_odd}
If $N$ is odd and $\Phi=\{\phi_j \}_{j =1}^N$ is a biangular, tight frame for $\mathbb F^M$, then $\Phi$ is equidistributed with even multiplicities.
\end{thm}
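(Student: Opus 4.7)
The plan is to recognize this theorem as an immediate synthesis of the two preceding results, Proposition~\ref{prop_equi} and Lemma~\ref{lem_even}, and to simply chain them together. There is no new content to derive; the hypotheses of the theorem align perfectly with what is needed to invoke both prior results in sequence.

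First, I would apply Proposition~\ref{prop_equi} to the frame $\Phi$. Since $\Phi$ is assumed to be biangular and tight, Proposition~\ref{prop_equi} immediately tells us that $\Phi$ is equidistributed, so there exist frame angles $c_1, c_2$ with corresponding multiplicities $m_1, m_2 \in \mathbb N$ satisfying $m_1 + m_2 = N-1$.

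Next, I would invoke Lemma~\ref{lem_even}. At this stage, $\Phi$ is known to be a biangular, equidistributed frame with odd cardinality $N$ — precisely the hypotheses of Lemma~\ref{lem_even}. The conclusion of that lemma is exactly that the multiplicities $m_1$ and $m_2$ are both even, which finishes the proof.

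Since the theorem is essentially the concatenation of Proposition~\ref{prop_equi} and Lemma~\ref{lem_even}, there is no genuine obstacle to overcome — the only "work" is to notice that the role of this theorem is to package the two facts into a single usable statement for later applications (e.g., the analysis of $\Omega_{5,2}(\mathbb{C})$, where one will need to rule out the existence of a biangular tight frame by showing that no valid even multiplicities summing to $N-1=4$ are compatible with the tightness constraint from Equation~\ref{eq_tight_un}).
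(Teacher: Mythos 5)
Your proof is correct and is exactly the paper's own argument: the theorem is proved by applying Proposition~\ref{prop_equi} to obtain equidistribution and then Lemma~\ref{lem_even} to obtain even multiplicities. Nothing further is needed.
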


\begin{proof}
This follows directly from Proposition~\ref{prop_equi} and Lemma~\ref{lem_even}.
\end{proof}

Finally, we show that a tight, biangular Grassmannian frame can never exist in $\Omega_{5,2} ( \mathbb C)$.

\begin{thm}\label{th_5_2}
If  $\Phi$ is a tight Grassmannian frame in $\Omega_{5,2} (\mathbb C)$, then $|A_\Phi| \geq 3$.
\end{thm}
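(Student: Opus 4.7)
The plan is to rule out separately the cases $|A_\Phi|=1$ (equiangular) and $|A_\Phi|=2$ (biangular). First observe that every Grassmannian frame in $\Omega_{5,2}(\mathbb{C})$ must have coherence exactly $\mu_{5,2}(\mathbb{C}) = 1/\sqrt{2}$: Example~\ref{ex_Gr_52_tri} realizes this value, while the orthoplex bound (Theorem~\ref{th_ortho}) supplies the matching lower bound since $N=5 > D+1 = 4$. Thus the largest element of $A_\Phi$ equals $1/\sqrt{2}$.

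The equiangular case collapses immediately: if $\Phi$ were tight and equiangular with $c = 1/\sqrt{2}$, then \eqref{eq_tight_un} applied at any $\phi_j$ would give $1 + (N-1)c^2 = N/M$, i.e., $1 + 4\cdot\tfrac{1}{2} = \tfrac{5}{2}$, a contradiction.

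For the biangular case, let $c_1 = 1/\sqrt{2} > c_2$ with multiplicities $m_1,m_2$. By Theorem~\ref{th_btfs_odd}, $\Phi$ is equidistributed with $m_1,m_2$ both even; since $m_1+m_2 = 4$ with both positive, the only option is $m_1 = m_2 = 2$. Then \eqref{eq_tight_un} forces $1 + 2\cdot\tfrac{1}{2} + 2c_2^2 = \tfrac{5}{2}$, so $c_2 = \tfrac{1}{2}$. Now apply Corollary~\ref{th_sph_emb_zero} to produce unit vectors $y_1,\ldots,y_5 \in \mathbb{R}^3$ summing to zero, with $\langle y_j,y_l\rangle = 0$ whenever $|\langle\phi_j,\phi_l\rangle| = c_1$ and $\langle y_j,y_l\rangle = -\tfrac{1}{2}$ whenever $|\langle\phi_j,\phi_l\rangle| = c_2$.

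The graph recording pairs at inner product $0$ is $2$-regular on $5$ vertices, so it must be the $5$-cycle $C_5$ (the only $2$-regular simple graph on an odd prime number of vertices); after relabeling, its edges are $\{12,23,34,45,51\}$. This completely determines the Gram matrix $G$ of $\{y_j\}$: namely $G = I_5 - \tfrac{1}{2}\,C'$, where $C'$ is the adjacency matrix of the pentagram $\overline{C_5}$. The final step is to show $\operatorname{rank}(G) = 4$, which will contradict $\{y_j\} \subset \mathbb{R}^3$ (forcing $\operatorname{rank}(G) \le 3$). A short spectral computation using the standard circulant eigenvalues of $C_5$ (namely $2$ and $2\cos(2\pi k/5)$ for $k=1,\ldots,4$) yields the eigenvalues of $G$ as $0,\;(5-\sqrt{5})/4,\;(5-\sqrt{5})/4,\;(5+\sqrt{5})/4,\;(5+\sqrt{5})/4$, giving rank exactly $4$. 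The single zero eigenvalue is accounted for precisely by $\mathbf{1} \in \ker G$, which is the statement $\sum_j y_j = 0$ from Corollary~\ref{th_sph_emb_zero}; the rank bound $\operatorname{rank}(G)\le 3$ would demand a second zero eigenvalue, which simply is not present. The main obstacle is organizing this spectral bookkeeping cleanly; once the spherical embedding has reduced the problem to a fully specified $5\times 5$ Gram matrix, the rank-based contradiction is essentially mechanical. An equivalent, coordinate-level route---set $y_1 = e_3$, place $y_2,y_5$ at $120^\circ$ in the $xy$-plane, impose the remaining constraints to solve for $y_3$, and watch the normalization $\|y_3\|^2 = 1$ fail (one obtains $\|y_3\|^2 = 1/3$ instead)---can be substituted for readers preferring a concrete computation.
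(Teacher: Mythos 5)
Your proof is correct, and it diverges from the paper's argument in a worthwhile way at the final step. Both proofs agree through the reduction: coherence must equal $1/\sqrt{2}$ by the orthoplex bound and Example~\ref{ex_Gr_52_tri}, the equiangular case is impossible (the paper invokes the Welch bound, you check the tightness identity \eqref{eq_tight_un} directly with $c=1/\sqrt2$ --- both work), and in the biangular case Theorem~\ref{th_btfs_odd} plus \eqref{eq_tight_un} forces $m_1=m_2=2$, $c_1=1/\sqrt2$, $c_2=1/2$, hence embedded inner products $0$ and $-1/2$, each occurring twice per vertex. From there the paper performs a coordinate-by-coordinate case analysis on the $3\times 5$ matrix $Y$, normalizing $y_1=e_1$, $y_2=e_2$, eliminating subcases, and finally contradicting the zero-sum condition of Corollary~\ref{th_sph_emb_zero} via an impossible sign equation. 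You instead observe that the ``orthogonality graph'' is $2$-regular on $5$ vertices, hence $C_5$, so the Gram matrix of $\{y_j\}$ is the circulant $I_5-\tfrac12 A(\overline{C_5})$ with spectrum $\{0,\tfrac{5-\sqrt5}{4},\tfrac{5-\sqrt5}{4},\tfrac{5+\sqrt5}{4},\tfrac{5+\sqrt5}{4}\}$ and rank $4$, which cannot be the Gram matrix of vectors in $\mathbb{R}^3$. This is cleaner: it replaces the branching case analysis with one spectral computation, and --- as you note --- it does not actually use the zero-summing property at all (the all-ones kernel vector is automatic for the circulant), so Theorem~\ref{th_sph_emb} alone suffices where the paper leans on Corollary~\ref{th_sph_emb_zero}. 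The paper's route, on the other hand, is more elementary and makes the geometric obstruction visible in coordinates. One small correction: your parenthetical justification that $C_5$ is ``the only $2$-regular simple graph on an odd prime number of vertices'' is false in general (on $7$ vertices, $C_3\sqcup C_4$ is $2$-regular); the correct reason here is that a $2$-regular simple graph is a disjoint union of cycles of length at least $3$, and $5$ admits no such partition other than $5$ itself. This does not affect the validity of your argument for $N=5$.
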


\begin{proof}
First, note that Examples~\ref{ex_Gr_52_tri} and \ref{ex_Gr_52_bi} show that the lower bound in Theorem~\ref{th_ortho} is saturated in this setting.  In particular, we know that $\mu_{5,2} (\mathbb C) = \frac{1}{\sqrt{2}}$.

By way of contradiction, suppose that $|A_\Phi|<3$.  If $|A_\Phi|=1$, then $\Phi$ is an ETF, so it achieves the Welch bound, which means
$
\mu(\Phi) = \sqrt{\frac{3}{8}} < \frac{1}{\sqrt{2}},
$ 
a contradiction.  
Therefore, it must be that $|A_\Phi| =2$, meaning $\Phi$ is a BTF.

 Let $c_1,c_2$ denote the frame angles of $\Phi$.  Because $\Phi$ is a Grassmannian frame, we may assume with no loss of generality that $c_1=\frac{1}{\sqrt{2}}.$  By Corollary~\ref{th_btfs_odd}, $\Phi$ is equidistributed with multiplicities $m_1=m_2=2$.  Because $\Phi$ is a $\frac{5}{2}$-tight, unit-norm frame, Equation~\ref{eq_tight_un} becomes
$$
\frac{5}{2} = 1 + 2 c_1^2 + 2 c_2^2 = 1 + 2 \left(\frac{1}{\sqrt{2}} \right)^2 + 2 c_2^2,
$$
which implies that $c_2 = \frac{1}{2}$.

Next, let $Y = \{y_j\}_{j=1}^5$ be the zero-summing unit vectors obtained by embedding the vectors of $\Phi$ into $\mathbb R^3$, as in Corollary~\ref{th_sph_emb_zero}.  Because $\Phi$ is equidistributed with multiplicities $m_1=m_2=2$, it follows from Equation~\ref{eq_sph_emb_angles} that the statement
\begin{equation*}
\mathcal P_j : 
\left| \{ \langle y_j, y_l \rangle = 0 : l \neq j   \} \right| = \left| \{ \langle y_j, y_l \rangle= -1/2 = 0 : l \neq j   \} \right| = 2
\end{equation*}
must be true for all $j \in \{1,2,3,4,5\}$.  We will show that this  contradicts the zero-summing property of $Y$.

After an appropriate choice of unitary rotation, there is no loss of generality in assuming that $y_1 = e_1$.  The statement $\mathcal P_1$ implies that $y_1$ is orthogonal to two of the vectors of $Y$ and it has inner product $-\frac{1}{2}$ with the other two; therefore, we may further assume without loss of generality that, after an apropriate rotation, $y_2 = e_2$ and  that $\langle y_1, y_3 \rangle =0$ and $\langle y_1, y_4 \rangle = \langle y_1, y_5 \rangle = -\frac{1}{2}$.  Viewing $Y$ as a $3 \times 5$ matrix, these assumptions mean that its first row and it first two columns are completely determined.
$$
Y = 
\left[
\begin{array}{ccccc}
1 & 0 & 0 & -\frac{1}{2} & -\frac{1}{2} \\
0 & 1 & * & * & * \\
0 & 0 & * & * & * \\
\end{array}
\right].  
$$
We cannot have $\langle y_2, y_3 \rangle =0$, because then the statements $\mathcal P_1, \mathcal  P_2$ and $\mathcal P_3$ force
$$
\langle y_1, y_4 \rangle = \langle y_2, y_4 \rangle = \langle y_3, y_4 \rangle,
$$
which in turn contradicts $\mathcal P_4$.  Therefore,  $\langle y_2, y_3 \rangle = -1/2$.
$$
Y = 
\left[
\begin{array}{ccccc}
1 & 0 & 0 & -\frac{1}{2} & -\frac{1}{2} \\
0 & 1 & -\frac{1}{2} & * & * \\
0 & 0 & * & * & * \\
\end{array}
\right].  
$$
The statement $\mathcal P_2$ then implies that either (i) $\langle y_2, y_4 \rangle =0$ and  $\langle y_2, y_5 \rangle =-1/2$ or  (ii) $\langle y_2, y_4 \rangle =-1/2$ and  $\langle y_2, y_5 \rangle =0$.  Since it is clear that these two cases are symmetric, we assume with no loss in generality that  $\langle y_2, y_4 \rangle =0$ and  $\langle y_2, y_5 \rangle =-1/2$.
$$
Y = 
\left[
\begin{array}{ccccc}
1 & 0 & 0 & -\frac{1}{2} & -\frac{1}{2} \\
0 & 1 & -\frac{1}{2} & 0 & -\frac{1}{2} \\
0 & 0 & * & * & * \\
\end{array}
\right].  
$$
Finally, the unit-norm condition means that the remaining entries of $Y$ must satisfy
$$
Y = 
\left[
\begin{array}{ccccc}
1 & 0 & 0 & -\frac{1}{2} & -\frac{1}{2} \\
0 & 1 & -\frac{1}{2} & 0 & -\frac{1}{2} \\
0 & 0 & \pm \frac{\sqrt{3}}{2}  & \pm \frac{\sqrt{3}}{2} & \pm \frac{\sqrt{2}}{2} \\
\end{array}
\right], 
$$
but this contradicts the zero-summing condition, because there is no choice of signs for which
$$
\pm \frac{\sqrt{3}}{2}  \pm \frac{\sqrt{3}}{2}   \pm \frac{\sqrt{2}}{2} = 0.
$$
Therefore, $\Phi$ cannot exist, which completes the proof.
\end{proof}

\section*{Acknowledgments}
The authors were supported by
 NSF DMS 1307685, NSF ATD 1321779, and ARO W911NF-16-1-0008.

\section*{References}

\bibliography{C2_grass_bib}

\end{document}